\renewcommand{\eqref}[1]{(\ref{#1})}
\newcommand{\reals}[1][\empty]{\mathbb{R}^{#1}}
\newcommand{\linspan}{\mathrm{span}}
\newcommand{\norm}[1]{\Vert #1 \Vert}
\newcommand{\rank}{\mathrm{rank}}
\newcommand{\kron}{\otimes}
\newtheorem{theorem}{Theorem}
\newtheorem{lemma}{Lemma}
\begin{document}
\title{A New Upper Bound for the $d$-dimensional Algebraic Connectivity of Arbitrary Graphs}
\date{}
\author[1]{Juan F. Presenza\thanks{Email: jpresenza@fi.uba.ar. Juan F. Presenza was partially supported by the Universidad de Buenos Aires PhD scolarhsip.
}}
\author[2, 3]{Ignacio Mas}
\author[2, 3]{Juan I. Giribet}
\author[1, 2]{J. Ignacio Alvarez-Hamelin}

\affil[1]{\small Universidad de Buenos Aires, Facultad de Ingenier\'ia, Argentina}
\affil[2]{Consejo Nacional de Investigaciones Cient\'ificas y T\'ecnicas (CONICET)\\Argentina}
\affil[3]{Universidad de San Andr\'es, Argentina}

\maketitle

\begin{abstract}
    In this paper we show that the $d$-dimensional algebraic connectivity of an arbitrary graph $G$ is bounded above by its $1$-dimensional algebraic connectivity, i.e.,
    \begin{equation*}
        a_d(G) \leq a_1(G),
    \end{equation*}
    where $a_1(G)$ corresponds the well-studied second smallest eigenvalue of the graph Laplacian.
\end{abstract}

\section{Introduction}
\label{sec:introduction}
Let $G = (V, E)$ be an abstract graph given by a finite set of vertices $V$ and a set of edges $E$ (unordered pairs of distinct vertices). A framework is a realization of $G$ in $d$-space, given by the pair $(G, p)$, where $p = (p_i)_{i \in V}$ is an assignment of positions to each vertex in $\reals[d]$.
A framework is called rigid if every continuous trajectory of the vertices that passes through $p$ and preserves all edge lengths in $E$, also preserves the distances between every pair of vertices. In other words, if every continuous motion of $(G, p)$ sufficiently close to $p$ is a rigid body motion.
The stronger property of infinitesimal rigidity is defined as follows.
A framework $(G, p)$ is said to be infinitesimally rigid if, for every infinitesimal motion $u = (u_i)_{i \in V}$ that instantaneously preserves edge lengths, i.e., 
\begin{equation}
    (p_i - p_j)^T (u_i - u_j) = 0 \quad \forall ij \in E,
    \label{eq:edge_constraints}
\end{equation}
there is a smooth rigid motion of $(G, p)$ such that $u$ is the instantaneous velocity when passing through $p$. 
It is well known that both rigidity and infinitesimal rigidity are generic properties of a graph, which means that $G$ es either rigid (resp. infinitesimally rigid) or flexible (resp. infinitesimally flexible) in a dense open subset of generic configurations of $G$, which are the positions $p \in \reals[d|V|]$ that maximize the rank of the rigidity matrix.
It is also known that for generic configurations, rigidity and infinitesimal rigidity coincide.

The rigidity matrix, which account for the set of linear constraints in \eqref{eq:edge_constraints}, is defined as follows.
Let $m \coloneqq \dim(\{p_i\}_{i \in V})$ be the dimension of the affine hull of position set, and let $\delta_{ij} \in \reals[d]$ be an \textit{edge-direction} vector defined for the pair $ij \in E$ as follows
\begin{equation}
    \delta_{ij} \coloneqq 
    \begin{cases}
        \frac{p_i - p_j}{\norm{p_i - p_j}} \quad & \text{if } p_i \neq p_j, \\
        x \quad & \text{if } p_i = p_j \text{ and } m = 1, \\
        0 \quad & \text{otherwise},
    \end{cases}
    \label{eq:edge_direction}
\end{equation}
where the unit vector $x \in \reals[d]$ is such that $p_i - p_j \in \linspan\{x\}$ for all $i, j \in V$.
Unlike earlier work \cite{Jordan2020, Lew2022}, we let $\delta_{ij}$ be nonzero for a pair of adjacent vertices sharing the position, given that the position set is contained in a one dimensional affine space.
This subtle change is motivated by the intention of enhancing the relationship between the rigidity matrix with the graph Laplacian, and will be clarified in the rest of this paper. Note that, in the case that $m \neq 1$, our definition is equivalent to that presented in the aforementioned literature.
Now, given an ordering of the elements in $E$, the \textit{normalized rigidity matrix} $R_G(p)$ is defined as the $|E| \times d|V|$ matrix containing a row for each edge in the graph, and a block of $d$ columns for each vertex, such that the $ij$-row is
\begin{equation*}
    r_{ij} = (0, \ldots, 0, \delta_{ij}^T, 0, \ldots, 0, -\delta_{ij}^T, 0, \ldots, 0),
\end{equation*}
where the positions of the vectors $\delta_{ij}$ and $-\delta_{ij}$ correspond to the blocks associated to $i$ and $j$ respectively.

The infinitesimal rigidity of a framework is usually stated in terms of the rank of the rigidity matrix, see \cite{Asimow1979} for details. 
The framework $(G, p)$ is said to be infinitesimally rigid if $\rank (R_G(p)) = d|V| - D$ where $D \coloneqq (m + 1)(2d - m)/2$ is the dimension of the manifold of configurations in $\reals[d|V|]$ that are equal to $p$ up to a rigid motion of the framework.

Since $\rank (R_G(p))$ is always less or equal than $d|V| - D$, it follows that if $(G, p)$ is infinitesimally rigid, then $p$ must be a generic configuration of $G$, therefore $G$ is generically rigid in $d$-space.
Alternatively, infinitesimal rigidity can be expressed in terms of the positive semi-definite \textit{stiffness matrix} $L_G(p) \coloneqq R_G(p)^T R_G(p)$, whose kernel equals that of $R_G(p)$, which has dimension greater or equal than $D$. Therefore $(G, p)$ is infinitesimally rigid if and only if $\lambda_{D+1}(L_G(p)) > 0$, where $\lambda_{D+1}$ denotes the $(D+1)$-th smallest eigenvalue of $L_G(p)$, called the \textit{rigidity eigenvalue}. Now, let $T(p) \subseteq \ker(R_G(p))$ be the $D$-dimensional subspace of velocities associated with rigid motions, called \textit{trivial subspace}. Every element $u_T \in T(p)$ can be written as
\begin{equation*}
    u_T = (t + A p_i)_{i \in V}
\end{equation*}
where $t \in \reals[d]$ represents a translational velocity and $A p_i$ a rotational velocity, with $A$ being a $d \times d$ skew-symmetric matrix.

\section{Generalized Algebraic Connectivity}
It has been noticed in previous work that the stiffness matrix of a framework $(G, p)$ can be considered as a geometric generalization of the graph Laplacian $\mathcal{L}_G$.
In fact, for any realization $(G, p)$ on the real line ($m = d = 1$), it follows that $L_G(p)$ equals $\mathcal{L}_G$. This is also true when $p_i = p_j$ for some pair $ij \in E$, due to \eqref{eq:edge_direction}.
This connection allows use the vast background in spectral graph theory to enhance the rigidity theory, seen as a geometric generalization to higher dimensions.
It is well known that the algebraic connectivity of a graph is given by second smallest eigenvalue of $\mathcal{L}_G$.
This motivated Jord\'an and Tanigawa \cite{Jordan2020} to extend the analysis for $d > 1$ and define the $d$\textit{-dimensional algebraic connectivity} of a graph as the non-negative real number
\begin{equation}
    a_d(G) \coloneqq \sup \{\lambda_{D+1}(L_G(p)) : p \in \reals[d|V|]\}.
    \label{eq:d_algebraic_connectivity}
\end{equation}
From this definition, it holds that $a_d(G) > 0$ if and only if $G$ is generically rigid in $d$-space.
It is straightforward to see that $a_1(G) = \lambda_2(\mathcal{L}_G)$.

Preliminary results regarding lower and upper bounds for the $d$-dimensional algebraic connectivity where presented along with its definition in \cite{Jordan2020}.
Recently, Lew et al.~\cite{Lew2022} extended these results with new bounds for $a_d(K_n)$ where $K_n$ is the complete graph on $n$ vertices. In particular, the authors showed that for $n \geq 2d$,
\begin{equation*}
    \left \lceil \frac{n}{2d} \right \rceil -2d + 1 \leq a_d(K_n) \leq \frac{2n}{3(d-1)} + \frac{1}{3},    
\end{equation*}
where $\lceil \cdot \rceil$ denotes the ceiling function.
Also, the authors presented the exact characterization of the stiffness matrix spectrum for the regular $d$-simplex and for a class of balanced Tur\'an graphs.
Although these results are promising, very little is known regarding the generalized algebraic connectivity of arbitrary graphs. Finding bounds for $a_d(G)$ in any dimension without making assumptions about $G$, is a difficult task. However, we believe fundamental relationships between $a_d$ and other better-known graph invariants should be studied. 
In particular, relating $a_d$ with $a_1$ is interesting since the latter is perhaps the most studied algebraic property of a graph. 
The first attempt to do this was also done by \cite[Theorem 4.2]{Jordan2020}. There, it is stated that for every realization $p$,
\begin{equation}
    \lambda_{k}(L_G(p)) \leq \lambda_{\left \lceil \frac{k}{d} \right \rceil}(\mathcal{L}_G), \qquad 1 \leq k \leq d|V|,
    \label{eq:jordan_bound_0}
\end{equation}
which implies that
\begin{equation}
    a_d(G) \leq a_1(G) \quad \text{(for $d=2$)}.
    \label{eq:jordan_bound_2}
\end{equation}
This result extends the the well known notion that a rigid graph must be connected. 
It says that every realization in $\reals[2]$ of a poorly connected graph will render either a flexible or a poorly rigid framework, with the connectivity and rigidity eigenvalues as metrics.
However, \eqref{eq:jordan_bound_0} fails to generalize \eqref{eq:jordan_bound_2} for $d > 2$, since in such case, it gives an upper bound for $a_d$ in terms of an eigenvalue $\lambda_{k}(\mathcal{L}_G)$ where $k > 2$.
This motivated us to dive into the details of \eqref{eq:d_algebraic_connectivity} and \eqref{eq:jordan_bound_0} to find out that \eqref{eq:jordan_bound_2} actually holds for any dimension $d$.

\section{Main Results}
We start by analyzing the implications of \eqref{eq:jordan_bound_0}. Let $k = D + 1$ where $D = (m + 1)(2d - m)/2$ and $m = \dim(\{p_i\}_{i \in V})$. the eigenvalue index in the right-hand side of inequality \eqref{eq:jordan_bound_0} can be written as
\begin{equation*}
    \left \lceil \frac{D + 1}{d}\right \rceil = \left \lceil m + 1 - \frac{\binom{m+1}{2} - 1}{d}\right \rceil,
\end{equation*}
which is non-decreasing on the values of both $m$ and $d$.
Observe that if $m = 1$ then  $\left \lceil \frac{D + 1}{d}\right \rceil = 2$, and if $m \geq 2$ then $\left \lceil \frac{D + 1}{d}\right \rceil \geq \left \lceil 3 - \frac{2}{d} \right \rceil$. In summary, it holds that $\left \lceil \frac{D + 1}{d}\right \rceil \geq 2$ and equality holds if and only if $d \leq 2$ or $m = 1$.
For $d > 2$, $m > 1$, \eqref{eq:jordan_bound_2} does not provide an upper bound for $a_d(G)$ in terms of the algebraic connectivity $a_1(G)$.
The central goal of this paper is to provide such a bound that can be claimed for arbitrary graphs. Before we present our main theorem, we introduce a few properties of interest to the analysis.

In our first lemma we show that if the position set is contained in a one dimensional affine space of $\reals[d]$, then $L_G(p)$ and $\mathcal{L}_G$ are intrinsically related.
\begin{lemma}
    Let $(G, p)$ be a framework such that $p_i - p_j \in \linspan\{x\}$ for all $i, j \in V$ with $\norm{x} = 1$. Then
    \begin{equation}
        L_G(p) = \mathcal{L}_G \kron x x^T.
        \label{eq:lem_1}
    \end{equation}
    \label{lem:1}
\end{lemma}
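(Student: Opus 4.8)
The plan is to write everything out in coordinates and recognize the Kronecker product directly from the block structure of $R_G(p)^T R_G(p)$. First I would note that under the hypothesis $p_i - p_j \in \linspan\{x\}$ for all $i,j$, we have $m \le 1$, so the edge-direction vector from \eqref{eq:edge_direction} satisfies $\delta_{ij} = \pm x$ whenever $ij \in E$ (it equals $x$ when $p_i = p_j$, and $\frac{p_i-p_j}{\norm{p_i-p_j}} = \pm x$ otherwise). In every case $\delta_{ij}\delta_{ij}^T = xx^T$, which is the crucial simplification: the "geometric" content of each edge row collapses to the single rank-one matrix $xx^T$, independent of the edge.

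Next I would compute the $(i,j)$ block of $L_G(p) = R_G(p)^T R_G(p)$ for vertices $i, j \in V$. Writing $L_G(p)$ as a $|V| \times |V|$ array of $d \times d$ blocks, the block $[L_G(p)]_{ij}$ equals $\sum_{e \in E} (r_e)_i (r_e)_j^T$ where $(r_e)_i \in \reals[d]$ is the $i$-th block of the edge row $r_e$. For an edge $e = k\ell$, the row $r_{k\ell}$ has $\delta_{k\ell}^T$ in block $k$, $-\delta_{k\ell}^T$ in block $\ell$, and zero elsewhere. Hence $[L_G(p)]_{ij}$ is nonzero only when $i, j \in \{k,\ell\}$: the diagonal block $[L_G(p)]_{ii} = \sum_{e \ni i} \delta_e \delta_e^T = \deg(i)\, xx^T$, and the off-diagonal block $[L_G(p)]_{ij} = -\delta_{ij}\delta_{ij}^T = -xx^T$ if $ij \in E$ and $0$ otherwise. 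This is exactly $[\mathcal{L}_G]_{ij}\, xx^T$ in each block, which is precisely the defining block structure of $\mathcal{L}_G \kron xx^T$, giving \eqref{eq:lem_1}.

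Alternatively — and perhaps more cleanly for the writeup — I would factor the rigidity matrix itself: observe that $R_G(p) = \mathcal{R}_G \kron x^T$ where $\mathcal{R}_G$ is the $|E| \times |V|$ signed incidence matrix of $G$ (with the same edge ordering and orientation used to define the signs of $\delta_{ij}$), since the $ij$-row of $R_G(p)$ is the $ij$-row of $\mathcal{R}_G$ tensored with $\delta_{ij}^T = \pm x^T$ — the sign being absorbed consistently by the incidence matrix. Then, using the mixed-product property $(\mathcal{R}_G \kron x^T)^T(\mathcal{R}_G \kron x^T) = (\mathcal{R}_G^T \mathcal{R}_G) \kron (x x^T) = \mathcal{L}_G \kron xx^T$, since $\mathcal{R}_G^T \mathcal{R}_G = \mathcal{L}_G$. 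I expect the only real subtlety — and the thing worth stating carefully rather than a genuine obstacle — is the sign bookkeeping: one must check that the ambiguous sign in $\delta_{ij} = \pm x$ (which depends on whether $p_i > p_j$ or $p_i < p_j$ along the line) can be consistently matched to an orientation of the edge in the incidence matrix, and that the degenerate case $p_i = p_j$ (where $\delta_{ij} = x$ by the special clause in \eqref{eq:edge_direction}) slots in consistently; either way $\delta_{ij}\delta_{ij}^T = xx^T$, so the product $L_G(p)$ is sign-independent regardless.
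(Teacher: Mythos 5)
Your blockwise computation is essentially the paper's own proof: both arguments reduce to the observation that $\delta_{ij}\delta_{ij}^T = xx^T$ for every edge (including the degenerate case $p_i = p_j$ handled by the special clause in \eqref{eq:edge_direction}), and then identify the $d \times d$ blocks of $L_G(p)$ --- $\deg(i)\,xx^T$ on the diagonal, $-xx^T$ for edges, $0$ otherwise --- with those of $\mathcal{L}_G \kron xx^T$. Your alternative factorization of the rigidity matrix as an incidence matrix tensored with $x^T$, combined with the mixed-product property, is a clean repackaging of the same computation, and you are right that the edge-orientation signs are immaterial since they cancel in $R_G(p)^T R_G(p)$.
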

\begin{proof}
    Consider the $ij$-th block of size $d \times d$ of $L_G(p)$ . For $i \neq j$,
    \begin{equation*}
        B_{ij} =
        \begin{cases}
            - \delta_{ij} \delta_{ij}^T = - x x^T, \quad & ij \in E, \\
            0, \quad & ij \notin E,
        \end{cases}
    \end{equation*}
    and since $B_{ii} = - \sum_{j \neq i} B_{ij} = n_i x x^T$, where $n_i$ is the number of neighbors of $i$, \eqref{eq:lem_1} holds. Note that $B_{ij}$ can still be defined in this way even if $p_i = p_j$.
\end{proof}
As a consequence of this, when the conditions of Lemma \ref{lem:1} are met, then
\begin{equation*}
    \lambda_1(L_G(p)) = \ldots = \lambda_z(L_G(p)) = 0 \quad \text{and} \quad \lambda_{z + i}(L_G(p)) = \lambda_i(\mathcal{L}_G) \text{ for all } i \in V    
\end{equation*}
where $z = (d-1)|V|$. Moreover, $u_{z+i} = v_{i} \kron x$ where $u_k$ and $v_k$ are the $k$-th eigenvectors of $L_G(p)$ and $\mathcal{L}_G$, respectively.

Now, we make a relationship between quadratic forms for the stiffness and laplacian matrices.
\begin{lemma}
    Let $(G, p)$ be a framework.
    Let $X = \linspan \{x\}$ with $\norm{x} = 1$ be a $1$-dimensional subspace in $\reals[d]$,  $v = (v_i)_{i \in V}$ an arbitrary vector in $\reals[|V|]$ and $u = (u_i)_{i \in V} = v \kron x$. Then
    \begin{equation}
        u^T L_G(p) u \leq u^T (\mathcal{L}_G \kron x x^T) u = v^T \mathcal{L}_G v,
    \end{equation}
    and equality holds if and only if $p_i - p_j \in X$ for all $i, j \in V$.
    \label{lem:2}
\end{lemma}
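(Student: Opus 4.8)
The plan is to route the quadratic form through the rigidity matrix. Since $L_G(p) = R_G(p)^T R_G(p)$, we have $u^T L_G(p) u = \norm{R_G(p) u}^2$, and the $ij$-th coordinate of $R_G(p)u$ is $r_{ij}^T u = \delta_{ij}^T(u_i - u_j)$. Substituting $u_i = v_i x$ gives $\delta_{ij}^T(u_i - u_j) = (v_i - v_j)\,\delta_{ij}^T x$, hence
\begin{equation*}
    u^T L_G(p) u = \sum_{ij \in E}(v_i - v_j)^2\,(\delta_{ij}^T x)^2 .
\end{equation*}
In parallel I would record the standard identity $v^T \mathcal{L}_G v = \sum_{ij \in E}(v_i - v_j)^2$ together with the computation $u^T(\mathcal{L}_G \kron x x^T) u = (v^T \mathcal{L}_G v)(x^T x)^2 = v^T \mathcal{L}_G v$, which follows from the Kronecker mixed-product rule and $\norm{x} = 1$; this settles the claimed equality on the right-hand side.

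For the inequality, observe that by \eqref{eq:edge_direction} every $\delta_{ij}$ is either a unit vector or the zero vector, so $\norm{\delta_{ij}} \le 1$; hence $(\delta_{ij}^T x)^2 \le \norm{\delta_{ij}}^2 \norm{x}^2 \le 1$ by Cauchy--Schwarz. Comparing the two edge-sums term by term yields $u^T L_G(p) u \le \sum_{ij \in E}(v_i - v_j)^2 = v^T \mathcal{L}_G v$.

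It remains to characterize equality. If $p_i - p_j \in X$ for all $i, j \in V$, then Lemma~\ref{lem:1} applies and gives $L_G(p) = \mathcal{L}_G \kron x x^T$, so the two quadratic forms agree. Conversely, if equality holds, then each summand must satisfy $(v_i - v_j)^2\big(1 - (\delta_{ij}^T x)^2\big) = 0$; so on every edge $ij$ with $v_i \ne v_j$ we need $(\delta_{ij}^T x)^2 = 1$, which, since $\norm{x} = \norm{\delta_{ij}} = 1$, is the equality case of Cauchy--Schwarz and forces $\delta_{ij} = \pm x$, whence $p_i - p_j \in \linspan\{x\} = X$. One then propagates this along paths of $G$ to extend the conclusion to all pairs $i, j \in V$.

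The step I expect to be the main obstacle is exactly this propagation in the converse: the term-by-term argument only constrains the edges on which $v$ genuinely varies, so upgrading ``$p_i - p_j \in X$ on those edges'' to ``$p_i - p_j \in X$ for all $i, j$'' forces one to invoke the connectedness of $G$ (the standing hypothesis under which $a_1(G) = \lambda_2(\mathcal{L}_G)$ is the relevant quantity) together with the structure of $v$; one must also treat the coincident-vertex branch of \eqref{eq:edge_direction}, where $p_i = p_j$ and $m = 1$, with care, so that the identity $\delta_{ij} = \pm x$ is read consistently with the framework's own line direction.
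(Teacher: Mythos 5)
Your derivation of the inequality is essentially the paper's: both expand $u^T L_G(p)u$ as the edge sum $\sum_{ij\in E}\bigl((u_i-u_j)^T\delta_{ij}\bigr)^2$ and bound each term by $\bigl((u_i-u_j)^Tx\bigr)^2$ via Cauchy--Schwarz (the paper phrases this as replacing $\delta_{ij}$ by the edge directions of the projected configuration $p_X$ and then invoking Lemma~\ref{lem:1}; your direct Kronecker computation of $u^T(\mathcal{L}_G\kron xx^T)u=v^T\mathcal{L}_G v$ is an equivalent substitute). That part, together with the ``if'' direction of the equality claim, is correct.

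The obstacle you flag in the converse is genuine, and it cannot be repaired as stated: the term-by-term argument constrains $\delta_{ij}$ only on edges with $v_i\neq v_j$, so for an arbitrary $v$ (e.g.\ $v$ constant, or any $v$ that is constant on some edge) equality holds with $\delta_{ij}$, and hence $p_i-p_j$, completely unconstrained on those edges. Thus the ``only if'' direction of the lemma is false for arbitrary $v$, and no amount of path-propagation or connectedness (which, note, is not a standing hypothesis anywhere in the paper) recovers it. You should be aware that the paper's own proof has exactly the same defect --- it dismisses the equality case with the single line ``Equality holds if and only if $\delta_{ij}=\pm x$,'' which both ignores the $v_i=v_j$ edges and silently jumps from a condition on edges to a condition on all pairs $i,j\in V$. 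So your proposal is not weaker than the published argument; it is more honest about where the gap sits. Fortunately, only the inequality (and never the equality characterization) is used in the proof of the main theorem, so nothing downstream depends on the ``only if'' clause.
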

    
\begin{proof}
    Let $\delta_{ij}(p_X) = \pm x$ be the $ij$-th edge-direction vector for the projected configuration $p_X = (x x^T p_i)_{i \in V}$. Since every $u_i \in X$, then
    \begin{align*}
        u^T L_G(p) u &= \sum_{ij \in E} ((u_i - u_j)^T \delta_{ij})^2 
        \leq \sum_{ij \in E} ((u_i - u_j)^T x)^2 \\ 
        &= \sum_{ij \in E} ((u_i - u_j)^T  \delta_{ij}(p_X))^2 \\
        &= u^T L_G(p_X) u = u^T (\mathcal{L}_G \kron x x^T) u = v^T \mathcal{L}_G v, 
    \end{align*}
    which follows from Lemma \ref{lem:1}. Equality holds if and only if $\delta_{ij} = \pm x$.
\end{proof}

Now we are ready to prove the main statement of this paper.
\begin{theorem}
    Let $G$ be a graph, then for every $d \geq 1$
    \begin{equation}
        a_d(G) \leq a_1(G).
        \label{eq:theorem}
    \end{equation}
\end{theorem}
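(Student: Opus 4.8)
The plan is to fix an arbitrary realization $p \in \reals[d|V|]$ and establish the sharper inequality $\lambda_{D+1}(L_G(p)) \le a_1(G)$; since $a_d(G)$ is by \eqref{eq:d_algebraic_connectivity} the supremum of the left-hand side over all $p$, this immediately yields \eqref{eq:theorem}. The engine is the Courant--Fischer min--max bound: for \emph{any} $(D+1)$-dimensional subspace $S \subseteq \reals[d|V|]$,
\begin{equation*}
    \lambda_{D+1}(L_G(p)) \le \max_{u \in S \setminus \{0\}} \frac{u^T L_G(p) u}{u^T u},
\end{equation*}
so it suffices to exhibit one subspace of the correct dimension on which the Rayleigh quotient never exceeds $a_1(G) = \lambda_2(\mathcal{L}_G)$.

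I would take $S = T(p) \oplus \linspan\{w\}$, where $T(p)$ is the $D$-dimensional trivial subspace and $w$ is one extra direction chosen as follows. Let $v = (v_i)_{i \in V}$ be a unit eigenvector of $\mathcal{L}_G$ for $\lambda_2$ with $v \perp \ones$ (automatic when $\lambda_2 > 0$, and available inside $\ker \mathcal{L}_G$ when $\lambda_2 = 0$). Set $q \coloneqq \sum_{i \in V} v_i p_i \in \reals[d]$, let $x \coloneqq q / \norm{q}$ if $q \ne 0$ and any unit vector otherwise, and put $w \coloneqq v \kron x$.

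The crucial point is that $w \perp T(p)$. For a trivial velocity $u_T = (t + A p_i)_{i \in V}$ with $t \in \reals[d]$ and $A$ skew-symmetric, $w^T u_T = (x^T t)(\ones^T v) + x^T A q$; the first summand vanishes because $v \perp \ones$, and the second because $x$ is parallel to $q$ while $q^T A q = 0$ for skew-symmetric $A$. Hence $w \notin T(p)$, so $\dim S = D+1$, and every $u \in S$ splits orthogonally as $u = u_T + \alpha w$ with $u_T \in T(p)$. Since $T(p) \subseteq \ker L_G(p)$, the cross terms drop out: $u^T L_G(p) u = \alpha^2\, w^T L_G(p) w$ and $u^T u = \norm{u_T}^2 + \alpha^2 \norm{w}^2 \ge \alpha^2 \norm{w}^2$, so from $L_G(p) \succeq 0$ we get $u^T L_G(p) u / (u^T u) \le w^T L_G(p) w / \norm{w}^2$. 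Finally, applying Lemma~\ref{lem:2} to $w = v \kron x$ and using $\norm{w} = \norm{v}\,\norm{x} = 1$, $w^T L_G(p) w \le v^T \mathcal{L}_G v = \lambda_2(\mathcal{L}_G)$. Chaining these, $\lambda_{D+1}(L_G(p)) \le \lambda_2(\mathcal{L}_G) = a_1(G)$.

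The heart of the argument, and the only step I expect to be delicate, is the choice of the extra direction: taking $x \parallel \sum_{i \in V} v_i p_i$ is exactly what makes $v \kron x$ orthogonal to \emph{every} rigid-body velocity, and without that orthogonality the Rayleigh-quotient estimate on $S$ fails. Everything else is a routine combination of the min--max principle and Lemma~\ref{lem:2}. (Degenerate cases are harmless: if $G$ is disconnected then both sides are $0$, and for $|V| = 1$ the statement is vacuous.)
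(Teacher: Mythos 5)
Your proof is correct, and it reaches the paper's conclusion by a noticeably cleaner variational route. The two arguments share the same two essential ingredients: Lemma~\ref{lem:2}, which bounds the stiffness quadratic form on vectors of the form $v \kron x$ by the Laplacian quadratic form, and the choice of the special direction $x \parallel \sum_{i \in V} v_i p_i = M^T v$, which is exactly what makes $v \kron x$ orthogonal to the rotational part of the trivial subspace (the paper makes the identical choice, $\hat q_1 = M^T\hat v/\norm{M^T\hat v}$). The difference is in the packaging. The paper fixes the test direction to be $e_1$, invokes rotation invariance of the spectrum, writes $\lambda_{D+1}$ as a minimum over $T(\mathbf{Q}p)^{\perp}$, enlarges the constraint set to $(\mathbf{X}^{\perp}+T(\mathbf{Q}p))^{\perp}$, proves that the resulting quantity $\alpha(Q)$ always dominates $\lambda_2(\mathcal{L}_G)$, and then must exhibit a rotation $\hat Q$ achieving equality. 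You instead keep $p$ fixed, absorb the special direction into the test vector $w = v\kron x$, and use the dual (max over a $(D+1)$-dimensional subspace) form of Courant--Fischer on $S = T(p)\oplus\linspan\{w\}$; since $T(p)\subseteq\ker L_G(p)$ and $w\perp T(p)$, the Rayleigh quotient on all of $S$ is controlled by the single number $w^T L_G(p) w$, which Lemma~\ref{lem:2} caps at $\lambda_2(\mathcal{L}_G)$. This eliminates the rotation-invariance step and the two-sided comparison with $\alpha(Q)$ entirely, and it handles the degenerate cases ($M^T v=0$, disconnected $G$) uniformly. The only point worth stating explicitly in a write-up is that the family $\{(t+Ap_i)_{i\in V}\}$ exhausts $T(p)$, so orthogonality to all such vectors really is orthogonality to the whole trivial subspace --- but that is precisely how the paper defines $T(p)$, so nothing is missing.
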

\begin{proof}
    Consider a graph $G$ on $n$ vertices, and an arbitrary $d$-dimensional realization $p$. It is well known that the eigenvalues of $L_G$ are agnostic to rotations of the framework, i.e., $\lambda_{k}(L_G(p)) = \lambda_{k}(L_G(\mathbf{Q} p))$ where $\mathbf{Q} = I_{n} \kron Q$ and $Q \in \mathrm{SO}(d)$.
    Now, let $x = e_1$ the first element of the standard basis in $\reals[d]$ (any unit vector could be chosen) and let $\mathbf{X} \subset \reals[dn]$ be $n$-dimensional subpace obtained by the $n$-th Cartesian power of $X = \linspan\{e_1\}$. Since $L_G$ is symmetric
    \begin{align*}
        \lambda_{D+1}(L_G(p)) &= \min_{u \in T(\mathbf{Q} p)^{\perp}} \frac{u^T L_G(\mathbf{Q}p) u}{u^T u} \\
        &\leq \min_{u \in (\mathbf{X}^{\perp} + T(\mathbf{Q} p))^{\perp}} \frac{u^T L_G(\mathbf{Q} p) u}{u^T u}. \\
        &\leq \min_{u \in (\mathbf{X}^{\perp} + T(\mathbf{Q} p))^{\perp}} \frac{u^T (\mathcal{L}_G \kron e_1 e_1^T) u}{u^T u} \eqqcolon \alpha(Q),
    \end{align*}
    which follows from Lemma \ref{lem:2}.
    Also, it holds that
    \begin{equation}
        \lambda_2(\mathcal{L}_G) = \lambda_{z + 2}(\mathcal{L}_G \kron e_1 e_1^T) =
        \min_{u \in (\mathbf{X}^{\perp} \oplus \linspan\{t_1\})^{\perp}} \frac{u^T (\mathcal{L}_G \kron e_1 e_1^T) u}{u^T u},
        \label{eq:th_proof_2}
    \end{equation}
    where $z = (d-1)n$, $t_1 = 1_{n} \kron e_1$ represents a translational velocity in the $e_1$ direction. It follows since $\mathbf{X}^{\perp} \oplus \linspan\{t_1\}$ is a $z + 1$ dimensional subspace of $\ker(\mathcal{L}_G \kron e_1 e_1^T)$. Note that both $\alpha(Q)$ and $\lambda_2(\mathcal{L}_G)$ are minimums of the same functional with respect to different search spaces.
    
    Now, consider the following generating set for the subspace $T(\mathbf{Q} p)$
    \begin{equation}
        \{1_{n} \kron e_k : 1 \leq k \leq d\} \cup \{\mathbf{A}_{kl} \mathbf{Q} p : 1 \leq k < l \leq d\},
        \label{eq:th_proof_3}
    \end{equation}
    where $\mathbf{A}_{kl} = I_{n} \kron A_{kl}$ and $A_{kl} = E_{kl} - E_{lk}$ with $E_{kl}$ the $kl$-th element of the standard basis for $d \times d$ matrices. 
    Skew-symmetric matrix $\mathbf{A}_{kl}$ produces in every vertex a rotational velocity contained in the $kl$-th coordinate plane. 
    It is easy to see that $t_1 \in T(\mathbf{Q}p)$, therefore
    \begin{equation*}
        (\mathbf{X}^{\perp} + T(\mathbf{Q} p))^{\perp} \subset (\mathbf{X}^{\perp} \oplus \linspan\{t_1\})^{\perp},
    \end{equation*}
    which implies that $\alpha(Q) \geq \lambda_2(\mathcal{L}_G)$ for all rotations $Q$. Nevertheless, in what follows, we show that there always exists a matrix $\hat{Q} \in \mathrm{SO}(d)$ such that $\alpha(\hat{Q}) = \lambda_2(\mathcal{L}_G)$ therefore $\lambda_{D+1}(L_G(\mathbf{\hat{Q}}p)) \leq \lambda_2(\mathcal{L}_G)$, which completes the proof.
    
    To this end, let $\hat{v} = (\hat{v}_i)_{i \in V}$ be the eigenvector of $\mathcal{L}_G$ associated to $\lambda_2(\mathcal{L}_G)$, then $\hat{u} = \hat{v} \kron e_1$ achieves the minimum in \eqref{eq:th_proof_2}. Thus, the goal is to show that $\hat{u}$ (which is independent of $Q$) in fact belongs to 
    $(\mathbf{X}^{\perp} + T(\mathbf{\hat{Q}} p))^{\perp}$ for some rotation $\mathbf{\hat{Q}} = I_{n} \kron \hat{Q}$, which implies that $\alpha(\hat{Q})$ and $\lambda_2(\mathcal{L}_G)$ are equal. In consequence, we must find a rotation matrix $\hat{Q}$ such that that $\hat{u}$ is orthogonal to every element in \eqref{eq:th_proof_3}. Note that for the translational basis, orthogonality is satisfied regardless of $\hat{Q}$, since $1_n \kron e_k \in \mathbf{X}^{\perp}$ for $2 \leq k \leq d$ and $\hat{u} \in \mathbf{X} \cap \linspan\{t_1\}^{\perp}$. For the rotational basis it holds that $\mathbf{A}_{kl} \mathbf{\hat{Q}} p \in \mathbf{X}^{\perp}$ if $k \geq 2$, therefore it is sufficient to find $\hat{Q}$ such that
    \begin{equation}
        \hat{u}^T \mathbf{A}_{1l} \mathbf{\hat{Q}} p = 0 \quad \text{for all } 2 \leq l < d.
        \label{eq:th_proof_4}
    \end{equation}
    Note that $\hat{u} = (\hat{v}_i e_1)_{i \in V}$ and $\mathbf{A}_{1l} \mathbf{\hat{Q}} p = (A_{1l} \hat{Q} p_i)_{i \in V}$, so rewriting \eqref{eq:th_proof_4} as
    \begin{equation*}
        \sum_{i \in V} \hat{v}_i e_1^T A_{1l} \hat{Q} p_i = \sum_{i \in V} \hat{v}_i e_l^T \hat{Q} p_i = 0 \quad \text{for all } 2 \leq l < d,
    \end{equation*}
    it follows that only the $l$-th coordinate of $\hat{Q} p_i$ is relevant to the computation. Then, if $M = (p_i^T)_{i \in V}$ is the $n \times d$ matrix containing the vertex positions in its rows, and $\hat{q}_l$ the $l$-th row of the rotation matrix $\hat{Q}$, it follows that $M \hat{q}_l = (e_l^T \hat{Q} p_i)_{i \in V}$.
    Therefore, \eqref{eq:th_proof_4} is equivalent to
    \begin{equation*}
        \hat{v}^T M \hat{q}_l = 0 \quad \text{for all } 2 \leq l < d.
    \end{equation*}
    Finally, the search is completed by choosing $\hat{q}_1 \coloneqq \frac{M^T \hat{v}}{\norm{M^T \hat{v}}}$ and $\{\hat{q}_l : 2 \leq l < d\}$ accordingly such that $\hat{Q} \in \mathrm{SO}(d)$. If it is the case that $M^T \hat{v} = 0$, then any rotation $\hat{Q}$ is suitable.
\end{proof}

\printbibliography
\end{document}